\newtheorem{theorem}{\bf Theorem}[section]
\newtheorem{definition}{\bf Definition}[section]
\newtheorem{lemma}{\bf Lemma}[section]
\newcommand{\E}{\mathrm{{E}}}
\newcommand{\Prob}{\mathrm{{P}}}
\newcommand{\dd}{\mathrm{d}}
\newcommand*{\eqb}{\begin{eqnarray}}
\newcommand*{\eqe}{\end{eqnarray}}
\newcommand{\fconv}{\Rightarrow}
\newcommand{\indyk}{{\mathbf{1}}}
\begin{document}

\begin{frontmatter}
\title{Scaling limits for L\'evy walks with rests}

\author{Marek Teuerle}
\address{Faculty of Pure and Applied Mathematics, Hugo Steinhaus Center,\\ Wroc{\l}aw University of Science and Technology,\\ Wybrze{\.z}e Wyspia{\'n}skiego 27, 50-370 Wroc{\l}aw, Poland}
\ead{marek.teuerle@pwr.edu.pl}

\begin{keyword}
L\'evy walk \sep functional convergence \sep heavy-tailed distribution \sep functional central limit theorem
\end{keyword}

\begin{abstract}
	In this paper we investigate the asymptotic properties of the wait-first and jump-first L\'evy walk with rest, which is a generalization of standard jump-first and jump-first L\'evy walk that assumes each waiting time in the model is a sum of two positive random variables. We investigate the asymptotic properties of the theses new-type waiting times. Next we use the previous results of this paper together with continuous mapping approach to establish the main result, which is a functional convergence in Skorokhod $\mathbb{J}_1$ topology for the L\'evy walks with rests.
\end{abstract}
\end{frontmatter}
%



\section{Introduction}

One of the most fundamental results concerning the weak convergence of finite dimensional distributions of random walks by Donsker \cite{donsker1952} states that the properly normalized and shifted random walk with jumps of finite variance converges to the Brownian motion. 
$$n^{-1/2}\left(\sum^{\left\lfloor n t\right\rfloor}_{i=1}X_i-n\mathrm{E} X_i\right)\stackrel{f.d}{\rightarrow}B(t),\quad\mathrm{as}\quad n\to\infty.$$

In case of the continuous-time random walk the scaling limiting process depends strongly on the properties of the waiting times and jumps. For instance, if the the waiting times are heavy-tailed with index $\alpha\in (0;1)$ and independent of the jumps that are heavy-tailed with different index $\beta\in (\beta)$  the scaling limit is an $\alpha$-stable process subordinated to the inverse $\beta$-stable subordinator \cite{Meerschaert31}:
$$
a_n\left(\sum^{N(b_n t)}_{i=1}X_i\right)\stackrel{f.d}{\rightarrow}\left(L^{-}_\alpha(S_\beta^{-1}(t))\right)^+,\quad\mathrm{as}\quad n\to\infty,
$$
where $a_n$ and $b_n$ are appropriate sequences of constants, N(t) is the renewal process generated be the sequence of waiting times and the limit is the right-continuous version of a left-continuous version of $\alpha$-stable process subordinated to the inverse $\beta$-stable subordinator. 

In this paper we investigate a functional convergence in Skorokhod $\mathbb{J}_1$ topology, which generalizes the convergence of finite dimensional distributions, for the special case of coupled continuous-time random walk, called L\'evy walk with rests

This article is structured as follows. In \S 2 we recall the construction and main properties of various continuous-time random walks. Their special case with heavy-tailed waiting times equal (up to some multiplicative constant) to the length of jumps lead us to the class of L\'evy walks.  In \S 3, we introduce a new generalization of L\'evy walks with allows additional resting times before every jumps. The main results of this work are presented in \S 4. First, we present the asymptotic properties of the new waiting times with rests. Secondly, we establish the functional convergence in Skorokhod $\mathbb{J}_1$ topology of introduced class of L\'evy walks with rests. The last section is the summary of the obtained results and their possible applications.

\section{L\'evy walks}

Continuous-time random walk (CTRW), which is a generalization of classical random walk \cite{Pearson, Feller} was introduced to mathematics by Weiss and  Montroll in 1965 \cite{Montroll}.
The classical uncoupled CTRW is a \emph{c\'adl\'ag} stochastic process, in which every jump of random length occurs after random waiting time. Moreover, we assume that successive waiting times and jumps are independent from each other and from the previous pairs.  Now, CTRWs are well established mathematical models \cite{KlafterSokolov,Komorowski2005,MetzlerKlafter} with a rich class of physical applications: modelling of charge carrier transport in amorphous semiconductors \cite{Scher}, subsurface tracer dispersion \cite{Scher2}, electron transfer \cite{Nelson}, behavior of dynamical systems \cite{Bologna},  noise in plasma devices \cite{Chechkin2002}, self-diffusion in micelles systems \cite{Ott}, models in gene regulation \cite{Lomholt2005} to dispersion in turbulent systems \cite{Solomon1993}, to name only few.

The CTRW with heavy-tailed waiting times was the stochastic argument in the derivation of the fractional anomalous diffusion equations \cite{MetzlerKlafter}, where the fractional operator affects the time domain. On the other hand one can also consider a CTRW with heavy-tailed jumps and light-tailed waiting times (with finite variance), which is called L\'evy flight (LF). This last process has a non physical property, which is diverging variance. 

A class of processes that combines successfully two previously mentioned  elements, which are heavy-tailed features in space domain and finite variance of trajectory is called wait-first L\'evy walk \cite{Klafter2, Klafter1, Magdziarz2012, Teuerle2012, MagdziarzLW2015}. Mathematically it is simply a coupled CTRWs model with the waiting time equal (up to some multiplicative constant) to the length of the corresponding jump. Such a construction lead us to property that at any point of the trajectory its mean-square displacement is finite, although the length of the jumps are heavy-tailed.  We can also distinguish at least two other types of L\'evy walks. For the recent comprehensive review over latest results concerning L\'evy walks see \cite{LW_review,TeuerleOver2012}. In the following part we present formal definition of L\'evy walks.

\begin{definition}
Let $\{(T_i,\mathbf{J}_i)\}_{i\geq 1}$ be a sequence of independent and identically distributed (IID) random vectors such that $\Prob(T_i>0)=1$ and $T_i$ is heavy-tailed with index $\alpha\in(0;1)$, i.e.
\begin{equation}
\lim_{t\to \infty}t^\alpha\Prob(T_i>t)=c_1, \qquad\mathrm{where}\quad c_1>0,
\label{heavytailed}
\end{equation}
and the jumps $\mathbf{J}_i$ are equal to
\begin{equation}
\mathbf{J}_i=v_0\mathbf{V}_iT_i,\quad 
\label{jumps}
\end{equation}
where $\{\mathbf{V}_i\}_{i\geq 1}$ is an IID sequence of non-degenerate unit vectors from $\mathbb{R}^d$ independent of $\{T_i\}_{i\geq 1}$,

then, for the counting process 
$$N_t=\max\{n:T_1+T_2+\ldots+T_n\leq t\},$$
the processes
$$
\mathbf{U}(t)=\sum_{i=1}^{N_t}\mathbf{J}_i,\qquad
\mathbf{O}(t)=\sum_{i=1}^{N_t+1}\mathbf{J}_i,\qquad
$$
are called wait-first L\'evy walk, jump-first L\'evy walk, respectively.
\end{definition}

In next parts of this work we will use a term L\'evy walk for the family of above defined processes.

The LWs have a very intuitive behaviour and can be used to model random phenomena at the microscopic level. Namely, if parameter $v_0=1$ (this assumption is held thereafter for the simplicity), then a particle whose position is described by wait-first LW starts its motion at the origin and stays there for the random time $T_1$, then it makes the first jump $\mathbf{J}_1$. Next, it stays at the new location for the random time $T_2$ before making next jump $\mathbf{J}_2$, and then the scheme repeats. On the other hand, a particle whose position is described by jump-first LW starts its motion by performing $\mathbf{J}_1$ immediately after start and then waits in that location for random time $T_1$ and then the scheme repeats. In both cases due to relation (\ref{jumps}), the length of each jump $\mathbf{J}_i$ is equal to the length of the preceding waiting time ${T}_i$, while the direction of each jump $\mathbf{J}_i$ is governed by the random vector $\mathbf{V}_i$.  It is worth mentioning that both processes belongs to the class of \textit{c\'adl\'ag} processes. Another very important difference from the modelling point of view is that wait-first LW has all moments finite, while jump-first LW process has all moments infinite \cite{Magdziarz2015,Magdziarz2012,Teuerle2012,TeuerleOver2012, MagdziarzLW2015}.

L\'evy walks have been found as excellent models in the description of various phenomena, such as anomalous diffusion in complex systems \cite{MetzlerKlafter,Sokolov,Escande,MagdziarzWeron, Zumofen},
through human travel \cite{Brockmann2,Gonzales} and epidemic spreading \cite{Brockmann3, Dybiec}, to the foraging patterns of micro-organisms and animals \cite{Edwards,Buchanan, Sims, Viswanathan} as well as the migration of swarming bacteria \cite{bacLW2015}, to name only few (see examples of applications in \cite{LW_review} and references therein).  

Thus, it is a strong motivation for us to develop mathematical theory and further generalizations of L\'evy walks that include a additional resting time, esp. due to some recent work about the neuronal messenger ribonucleoprotein transport where L\'evy walks with this additional rests are seems to be a proper model \cite{newNature}. Obtained results, besides being interesting mathematically themselves, can be also applied in the statistical analysis of the above mentioned physical and biological systems. The numerical approximations developed on our results can be used by physicians and ecologist in verifying their hypotheses. 

\section{L\'evy walks with random rests}
  
The generalization of L\'evy walks considered here is based on the assumption that in every pair of waiting time and jump, the waiting times is a sum of two positive random variables that can be dependent. During this additional part in the waiting time, which can be called a \textit{rest} (or \textit{resting time}) the value of the process does not change. This new construction is illustrated in Fig. \ref{trajektorie}. The formal definition of L\'evy walks with rests is given on the following part.

\begin{definition}
\label{def:lwwithrests}
Let $\{(R_i,T_i,\mathbf{J}_i)\}_{i\geq 1}$ be a sequence IID random vectors such that the waiting times $T_i$'s are heavy-tailed with index $\beta\in(0;1)$ and $\Prob(T_i>0)=1$, see (\ref{heavytailed}), \\ the jumps $\mathbf{J}_i$'s are constructed as follows:
$$
\mathbf{J}_i=\mathbf{V}_i T_i,
$$
and
the resting times $R_i$'s are heavy-tailed with index $\beta\in(0;1)$, i.e.
$$
\lim_{t\to \infty}t^\beta\Prob(R_i>t)=c_2, \qquad\mathrm{where}\quad c_2>0,
$$

$\Prob(R_i>0)=1$ and are independent of any $T_i$'s and $\mathbf{J}_i$'s,
then, for the counting process 
$$N^R_t=\max\{n:T_1+R_1+T_2+R_2+\ldots+T_n+R_n\leq t\},$$
the processes
\begin{equation}
\mathbf{U}^R(t)=\sum_{i=1}^{N^R_t}\mathbf{J}_i,\qquad
\mathbf{O}^R(t)=\sum_{i=1}^{N^R_t+1}\mathbf{J}_i,\qquad
\label{lwwithrests}
\end{equation}
are called wait-first L\'evy walk with rests and jump-first L\'evy walk with rest, respectively.
\end{definition}

\begin{figure}[!h]
\centering\includegraphics[scale=0.75]{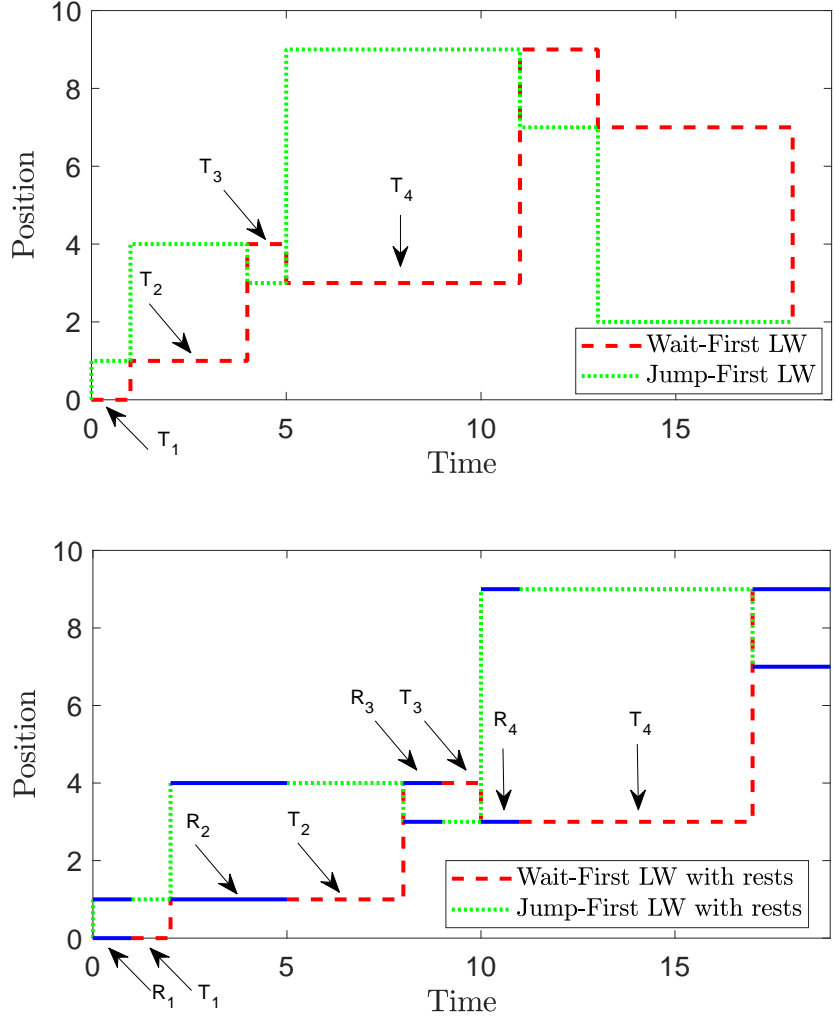}
\caption{Exemplary trajectories of wait-first and jump-first LW (top panel) and wait-first and jump-first LW with rests (bottom panel) generated by the same sequence of $\{T_i, \mathbf{J}_i\}^{6}_{i=1}$. Please note that the resting times $R_i$'s (blue solid line) proceed respective waiting times $T_i$'s.}
\label{trajektorie}
\end{figure}
The modified counting process $N^R_t$ in all considered processes defined in Eq. (\ref{lwwithrests}) lead us to the new class of coupled CTRWs walks. In next parts of this work we will use a term L\'evy walk with rests for the family of processes defined in (\ref{lwwithrests}).

Asymptotic analysis of these constructions is a first step towards the asymptotic analysis of the family of continuous coupled CTRWs with random rests described in \cite{KlafterSokolov,ZaburdaevChukbar,LW_review}.

\section{Functional convergence for the wait-first and jump-first L\'evy walks with rest}

In our further consideration we use the general framework of L\'evy process therefore we recall here its definition \cite{sato}.
We recognize process $\mathbf{X}(t)$ as a $d$-dimensional L\'evy process if its Fourier transform has the following form:
$$
\E \exp\{i\langle\mathbf{k},\mathbf{X}(t)\rangle\}=\exp\{-t\psi(\mathbf{k})\},\qquad \mathbf{k}=(k_1,k_2,\ldots,k_d)^T\in\mathbb{R}^d
$$
where the function $\psi(\mathbf{k})$ is the following Fourier symbol:
\begin{equation}
\label{levy-khinchin_exponent}
\psi(\mathbf{k})=-i\langle\mathbf{k},\mathbf{a}\rangle+\frac{1}{2}\langle\mathbf{k},\mathbf{Qk}\rangle+\int\limits_{\mathbf{x}\neq\mathbf{0}}{\left(1+\frac{i\langle\mathbf{k},\mathbf{x}\rangle}{1+\|\mathbf{x}\|^2}-\exp(i\langle\mathbf{k},\mathbf{x}\rangle)\right)\nu(\dd x)}.
\end{equation}
In Eq. (\ref{levy-khinchin_exponent}) the symbol $\langle\cdot,\cdot\rangle$ denotes standard inner product, $\|\mathbf{\cdot} \|$ is the length of given vector, $\mathbf{a}\in\mathbb{R}^d$ is the drift, $\mathbf{Q}$ is the Gaussian covariance matrix, \ $\nu$ is the L\'evy measure $\mathbb{R}^d\setminus\{\mathbf{0}\}$ satisfying $\int\limits_{\mathbf{x}\neq\mathbf{0}}\min(\|\mathbf{x}\|^2,1)\nu(\dd x)<\infty$. The triplet $[\mathbf{a},\mathbf{Q},\mathbf{\nu}]$ is called L\'evy triplet and uniquely determines L\'evy process $\mathbf{X}(t)$. In the next part we will characterize different L\'evy processes, that arise as functional limits if considered models, using their the L\'evy triplet.

\subsection{Asymptotic properties of the waiting times with rests}
Before we present our main result, we deal with the asymptotic properties of the modified waiting times. 

Let us recall that the waiting times are heavy-tailed distributed with $\alpha\in (0;1)$, (see \ref{heavytailed}), therefore the following convergence holds \cite{Meerschaert1}:
\begin{equation}
n^{-1/\alpha}\sum_{i=1}^{[nt]}{T_i}\fconv S^{}_\alpha(t),\quad \mathrm{as}\ n\to\infty,
\label{waitingtimesfconv}
\end{equation}
where the scaling limit $S^{}_\alpha(t)$ is called an $\alpha$-stable subordinator fully characterized by its L\'evy triplet
\begin{equation}
\label{waitingtimeslevytriplet}
\left[\ \int\limits_{x\neq 0}\frac{x}{1+x^2} \nu(\dd x)
,0,\nu(\dd x)=\frac{\alpha  x^{-\alpha-1}}{\Gamma(1-\alpha)}\mathbf{1}_{\{x>0\}} \dd x\right],
\end{equation}
where $x\in \mathbb{R}_+$.
Symbol "$\Rightarrow$" used in formula (\ref{waitingtimesfconv}) denotes weak functional convergence in the Skorokhod $\mathbb{J}_1$ topology, for details see \cite{Billingsley,Whitt}, which also implies the convergence in distribution.

In the next lemma we derive the scaling limit for the partial sums of modified waiting times.

\begin{lemma}
\label{lemma:lwwithrestsfconv}
Let $\left\{(T_i,{R}_i)\right\}_{i\geq 1}$ be the sequence of iid waiting times and rests as in Definition \ref{def:lwwithrests}. Then:
\begin{itemize}
\item[(i)] if $T_i$ and ${R}_i$ are independent for all $i\geq 1$, then:
\begin{equation}
\label{waitingrestsfconv1}
n^{-1/\min(\alpha,\beta)} \sum_{i=1}^{[nt]}\left( T_i+R_i\right)\fconv S_{\min(\alpha,\beta)}(t),\quad \mathrm{as}\ n\to\infty,
\end{equation}
where $S_{\min(\alpha,\beta)}(t)$ is the L\'evy process with the following L\'evy triplet
$$
\left[\ \int\limits_{x\neq 0}\frac{x}{1+x^2} \nu(\dd x)
,0,\nu(\dd x)=\frac{\min(\alpha,\beta) x^{-\min(\alpha,\beta)-1}}{\Gamma(1-\min(\alpha,\beta))}\mathbf{1}_{\{x>0\}} \dd x\right],
$$
where $x\in \mathbb{R}_+$,
\item[(ii)] if $T_i={R}_i$ for all $i\geq 1$, then:
\begin{equation}
\label{waitingrestsfconv2}
n^{-1/{\alpha}} \sum_{i=1}^{[nt]}\left( T_i+R_i\right)\fconv S^R_{\alpha}(t),\quad \mathrm{as}\ n\to\infty,
\end{equation}
where $S^R_{\alpha}(t)$ is the L\'evy process with the following L\'evy triplet
\begin{equation}
\label{waitingreststriplet2}
\left[\ \int\limits_{x\neq 0}\frac{x}{1+x^2} \nu_R(\dd x)
,0,\nu_R(\dd x)=\frac{\alpha 2^\alpha x^{-\alpha-1}}{\Gamma(1-\alpha)}\mathbf{1}_{\{x>0\}} \dd x\right],
\end{equation}
where $x\in \mathbb{R}_+$.
\end{itemize}

\end{lemma}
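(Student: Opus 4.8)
The plan is to reduce both claims to the one-dimensional functional limit theorem for heavy-tailed partial sums already invoked in (\ref{waitingtimesfconv}), applied to the single sequence of combined increments $W_i := T_i + R_i$. Once we know that $W_i$ is heavy-tailed with the appropriate index and identify the normalization, both the $\mathbb{J}_1$-convergence and the form of the limiting L\'evy triplet follow directly from the cited result. Thus the only genuine content is (a) computing the tail index of $W_i$ in case (i), and (b) a scaling computation in case (ii).

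For part (i), I would first recall that a heavy-tailed law with index $\gamma\in(0;1)$ in the sense of (\ref{heavytailed}) is regularly varying at infinity with exponent $-\gamma$, and hence subexponential. For \emph{independent} regularly varying summands the tail of the sum is asymptotically the sum of the tails, so that
$$\Prob(W_i>t)\sim c_1\, t^{-\alpha}+c_2\, t^{-\beta},\qquad t\to\infty.$$
Since the slower-decaying term dominates, $W_i$ is heavy-tailed with index $\min(\alpha,\beta)$: the limiting tail constant is $c_1$ when $\alpha<\beta$, $c_2$ when $\beta<\alpha$, and $c_1+c_2$ when $\alpha=\beta$. With this tail established, the functional convergence (\ref{waitingrestsfconv1}) is exactly (\ref{waitingtimesfconv}) with $\alpha$ replaced by $\min(\alpha,\beta)$ and the normalizing sequence chosen to absorb the tail constant, producing the $\min(\alpha,\beta)$-stable subordinator with the stated triplet.

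For part (ii), the identity $T_i={R}_i$ forces $W_i=2T_i$, so that
$$n^{-1/\alpha}\sum_{i=1}^{[nt]}\left(T_i+R_i\right)=2\,n^{-1/\alpha}\sum_{i=1}^{[nt]}T_i.$$
By (\ref{waitingtimesfconv}) the right-hand side converges in $\mathbb{J}_1$ to $2S_\alpha(t)$; since multiplication by the fixed constant $2$ is continuous on Skorokhod space, the continuous-mapping theorem gives $S^R_\alpha(t)\eqd 2S_\alpha(t)$. It then remains to verify that $2S_\alpha$ carries the triplet (\ref{waitingreststriplet2}). Dilating a subordinator by $2$ pushes its L\'evy measure forward under $x\mapsto 2x$; substituting $x=y/2$ in $\nu(\dd x)=\tfrac{\alpha x^{-\alpha-1}}{\Gamma(1-\alpha)}\dd x$ yields $\nu_R(\dd y)=\tfrac{\alpha\, 2^{\alpha}\, y^{-\alpha-1}}{\Gamma(1-\alpha)}\dd y$, which is precisely the factor $2^{\alpha}$ in (\ref{waitingreststriplet2}), and the drift is recovered consistently as the truncated first moment of $\nu_R$.

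The main obstacle I anticipate is the tail computation in part (i): one must invoke the subexponential (regular-variation) convolution asymptotics carefully and treat the three regimes $\alpha<\beta$, $\alpha=\beta$, $\alpha>\beta$ separately, so that the normalizing constants are matched to the claimed triplet with exponent $\min(\alpha,\beta)$. Part (ii), by contrast, is essentially bookkeeping once the dilation property of L\'evy measures is applied.
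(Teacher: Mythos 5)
Your proposal is correct and follows essentially the same route as the paper: reduce everything to the known one-dimensional functional limit theorem for heavy-tailed partial sums, show in case (i) that $T_i+R_i$ has tail index $\min(\alpha,\beta)$, and in case (ii) identify the limit as a dilated $\alpha$-stable subordinator whose L\'evy measure picks up the factor $2^\alpha$. The only cosmetic differences are that you verify the domain-of-attraction claim explicitly via regular-variation convolution asymptotics where the paper simply cites the GCLT, and in (ii) you use the continuous mapping theorem together with the pushforward of the L\'evy measure under $x\mapsto 2x$, whereas the paper recomputes the triangular-array tail limit $n\Prob(n^{-1/\alpha}(2T_1)>x)\to 2^\alpha x^{-\alpha}/\Gamma(1-\alpha)$; both arguments yield the same conclusion.
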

\begin{proof}
The proof of Lemma \ref{lemma:lwwithrestsfconv} is presented in the Appendix A.
\end{proof}

Asymptotically, as shown in \cite{Magdziarz2015}, the $d$-dimensional jumps of the form $\mathbf{J}_i=\mathbf{V_i} T_i$ converge functionally to the $d$-dimensional $\alpha$-stable process, that are fully described by the asymptotics of the waiting times $T_i$ and the distribution of $V_i$, namely:
\begin{align}
\label{jumpfconv}
n^{-1/\alpha} \sum_{i=1}^{[nt]} \mathbf{J}_i\fconv \mathbf{L}_{\alpha}(t),\quad \mathrm{as}\ n\to\infty,
\end{align}
where $\mathbf{L}_{\alpha}(t)$ is described by the following L\'evy triplet
$$
\left[\ \int\limits_{\mathbf{x}\neq \mathbf{0}}\frac{\mathbf{x}}{1+\|\mathbf{x}\|^2}\mathbf{\nu}_{\mathbf{L}^{}_{\alpha}}(\dd \mathbf{x}),\mathbf{0},\mathbf{\nu}_{\mathbf{L}^{}_{\alpha}}(\dd \mathbf{x})=\int\limits_{\mathbf{D}}\frac{\alpha \|\mathbf{x}\|^{-\alpha-1}}{\Gamma(1-\alpha)}\mathbf{1}_{\{\|\mathbf{x}\|>0\}} \mathbf{\Lambda}(\dd \mathbf{u})\right],
$$
where $\mathbf{x}\in \mathbb{R}^d\setminus\{\mathbf{0}\}$, $\mathbf{u}=\mathbf{x}/\|\mathbf{x}\|\in \mathbf{D}\subset \mathbb{S}^{d-1}$, $\mathbf{\nu}_{\mathbf{L}^{}_{\alpha}}$ is the L\'evy measure given by Eq.~(\ref{waitingtimeslevytriplet}) and $\Lambda(\dd \mathbf{u})=\Prob(\mathbf{V}_1\in \dd \mathbf{u})$ is called the spectral measure.

\subsection{Asymptotic properties of joint waiting times with rests and jumps}

In the next lemma we derive the scaling limit of joint waiting times with rests and jumps partial sums, which be a crucial step in the proof of the main result.
\begin{lemma}
\label{lemma:joinfconv}
Let $\{(R_i,T_i,\mathbf{J}_i)\}_{i\geq 1}$ be a sequence of random vectors as in Definition \ref{def:lwwithrests}. Then:
as $n\to\infty$:
\begin{itemize}
\item[(i)] if $T_i$ and ${R}_i$ are independent for all $i\geq 1$ and the index of stability $\alpha$ of $T_i$ is smaller then the index of stability $\beta$ of $R_i$, the following holds:
\begin{equation}
\label{jointfconv1}
\left(n^{-1/\alpha} \sum_{i=1}^{[nt]} \mathbf{J}_i,n^{-1/\alpha} \sum_{i=1}^{[nt]} (T_i+R_i) \right)\fconv \left(\mathbf{L}_{\alpha}(t),S_{\alpha}(t)\right),
\end{equation}
where $\left(\mathbf{L}_{\alpha}(t),S_{\alpha}(t)\right)$ is the $(d+1)$-dimensional L\'evy process described by the L\'evy triplet
\begin{equation}
\label{jointlevytriplet1}
\left[\ \int\limits_{\mathbf{x}\neq \mathbf{0}}\frac{\mathbf{x}}{1+\|\mathbf{x}\|^2}\mathbf{\nu}_{(\mathbf{L}^{}_{\alpha},S^{}_\alpha)}(\dd \mathbf{x}),\mathbf{0},\mathbf{\nu}_{(\mathbf{L}^{}_{\alpha},S_\alpha)}(\dd \mathbf{x})=\int\limits_{\mathbf{D}}\delta_{(\mathbf{u} t)}(\dd \mathbf{x}_1)  \nu(\dd t)\mathbf{\Lambda}(\dd \mathbf{u})\right],
\end{equation}
\item[(ii)] if $T_i$ and ${R}_i$ are independent for all $i\geq 1$ and the index of stability $\alpha$ of $T_i$ is greater then the index of stability $\beta$ of $R_i$, the following holds:
\begin{equation}
\label{jointfconv2}
\left(n^{-1/\alpha} \sum_{i=1}^{[nt]} \mathbf{J}_i,n^{-1/\beta} \sum_{i=1}^{[nt]} (T_i+R_i) \right)\fconv \left(\mathbf{L}_{\alpha}(t),S_{\beta}(t)\right),
\end{equation}
where $\left(\mathbf{L}_{\alpha}(t),S_{\beta}(t)\right)$ is the $(d+1)$-dimensional L\'evy process described by the L\'evy triplet
\begin{equation}
\label{jointlevytriplet2}
{\left[\ \! \int\limits_{\mathbf{x}\neq \mathbf{0}}\!\!\!\frac{\mathbf{x}}{1+\|\mathbf{x}\|^2}\mathbf{\nu}_{(\mathbf{L}^{}_{\alpha},S^{}_\beta)}(\dd \mathbf{x}),\mathbf{0},\mathbf{\nu}_{(\mathbf{L}^{}_{\alpha},S_\beta)}(\dd \mathbf{x})\!=\!\!\int\limits_{\mathbf{D}}\!\left(\nu(\dd (\mathbf{u} t)) \delta_0(\dd t)+ \nu(\dd t)\delta_\mathbf{0}(\dd (\mathbf{u} t))\right)\mathbf{\Lambda}(\dd \mathbf{u})\right],}
\end{equation}

\item[(iii)] if $T_i$ and ${R}_i$ are dependent according to the relation $T_i={R}_i$ for all $i\geq 1$, the following holds:
\begin{equation}
\label{jointfconv3}
\left(n^{-1/\alpha} \sum_{i=1}^{[nt]} \mathbf{J}_i,n^{-1/\alpha} \sum_{i=1}^{[nt]} (T_i+R_i) \right)\fconv \left(\mathbf{L}_{\alpha}(t),S^R_{\alpha}(t)\right),
\end{equation}
where $\left(\mathbf{L}_{\alpha}(t),S_{\alpha}(t)\right)$ is the $(d+1)$-dimensional L\'evy process described by the L\'evy triplet
\begin{equation}
\label{jointlevytriplet3}
\left[\ \int\limits_{\mathbf{x}\neq \mathbf{0}}\frac{\mathbf{x}}{1+\|\mathbf{x}\|^2}\mathbf{\nu}_{(\mathbf{L}^{}_{\alpha},S^{R}_\alpha)}(\dd \mathbf{x}),\mathbf{0},\mathbf{\nu}_{(\mathbf{L}^{}_{\alpha},S^{R}_\alpha)}(\dd \mathbf{x})=\int\limits_{\mathbf{D}}\delta_{(\mathbf{u} t)}(\dd \mathbf{x}_1)  \nu_R(\dd t)\mathbf{\Lambda}(\dd \mathbf{u})\right],
\end{equation}
\end{itemize}
where $\mathbf{x}=(\mathbf{x}_1,t)\in \mathbb{R}^d\setminus\{\mathbf{0}\}\times\mathbb{R}_+$,  $\mathbf{u}=\mathbf{x_1}/\|\mathbf{x_1}\|\in \mathbf{D}\subset \mathbb{S}^{d-1}$, $\mathbf{\nu}_R$ is the L\'evy measure given by Eq.~(\ref{waitingreststriplet2}), $\nu_R(\dd t)=2^\alpha \nu (\dd t)$ and $\mathbf{\Lambda}(\dd \mathbf{u})=\Prob(\mathbf{V}_1\in \dd \mathbf{u})$.
\end{lemma}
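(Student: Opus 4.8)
The plan is to treat each of the three cases as an instance of the functional limit theorem for the row sums of a triangular array of i.i.d.\ $(d+1)$-dimensional vectors, applied to $\mathbf{Y}_i=(\mathbf{J}_i,\,T_i+R_i)$. Writing $A_n=\mathrm{diag}(n^{-1/\alpha}\mathbf{I}_d,\,n^{-1/\gamma})$ with $\gamma=\alpha$ in cases (i) and (iii) and $\gamma=\beta$ in case (ii), the rescaled process is $\sum_{i=1}^{[nt]}A_n\mathbf{Y}_i$. Since the summands are i.i.d.\ and the candidate limits are L\'evy processes, $\mathbb{J}_1$-convergence on $D([0,\infty),\mathbb{R}^{d+1})$ follows from the standard criterion for convergence of processes with independent increments (see \cite{Meerschaert1,Whitt}): it suffices to verify (a) the vague convergence of the rescaled intensity $n\,\Prob(A_n\mathbf{Y}_1\in\cdot)\xrightarrow{v}\nu_{(\cdot,\cdot)}$ on $(\mathbb{R}^{d}\times\mathbb{R}_+)\setminus\{\mathbf 0\}$ to the L\'evy measure stated in the lemma, (b) the degeneracy of the Gaussian part, and (c) the convergence of the truncated first moment $n\,\E[A_n\mathbf{Y}_1\,\indyk_{\{\|A_n\mathbf{Y}_1\|\le 1\}}]$ to the drift. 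Because every tail index lies in $(0,1)$, parts (b) and (c) are routine: there is no Brownian component, and the truncated mean converges to exactly the drift term $\int \mathbf{x}/(1+\|\mathbf{x}\|^2)\,\nu(\dd\mathbf x)$ already displayed in each triplet. The whole proof thus reduces to identifying the vague limit in (a).

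For that computation I would exploit that $\mathbf{J}_1=\mathbf{V}_1T_1$ couples the spatial and temporal coordinates only through $T_1$, while $R_1$ is independent of $(T_1,\mathbf{V}_1)$. In case (i), where $\alpha<\beta$, the rest $R_1$ is negligible at the common scale $n^{1/\alpha}$, since $n\,\Prob(R_1>\delta n^{1/\alpha})\sim c_2\delta^{-\beta}n^{1-\beta/\alpha}\to 0$; hence $A_n\mathbf{Y}_1$ is asymptotically indistinguishable from $n^{-1/\alpha}(\mathbf{V}_1T_1,\,T_1)$, whose limiting intensity is carried by the ray $\{(\mathbf{u}t,t)\}$ and factorizes into the radial waiting-time law $\nu(\dd t)$, the spectral measure $\Lambda(\dd\mathbf{u})$, and the deterministic tie $\delta_{(\mathbf{u}t)}(\dd\mathbf{x}_1)$, which is precisely (\ref{jointlevytriplet1}). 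Case (iii) is identical in spirit: $T_1+R_1=2T_1$ forces the point $(\mathbf{V}_1T_1,2T_1)$ onto a single ray, and the scaling of the temporal marginal turns the waiting-time intensity into $\nu_R(\dd t)=2^{\alpha}\nu(\dd t)$, yielding (\ref{jointlevytriplet3}) with the spatial component slaved to $S^R_\alpha$ through $\Lambda$ and $\delta_{(\mathbf{u}t)}$.

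The genuinely different case is (ii), with $\alpha>\beta$ and the two coordinates normed at different rates. Here the heavy spatial excursions originate from $T_1$ while the heavy temporal excursions originate from $R_1$, and these sources are independent, so the limit is expected to be a product of independent $\mathbf{L}_\alpha$ and $S_\beta$; the analytic content is that all intensity off the two coordinate subspaces disappears. Concretely, a jointly large pair forces both $T_1\gtrsim\varepsilon n^{1/\alpha}$ and $R_1\gtrsim\delta n^{1/\beta}$, whence
\begin{equation*}
n\,\Prob\!\left(n^{-1/\alpha}\|\mathbf{J}_1\|>\varepsilon,\;n^{-1/\beta}(T_1+R_1)>\delta\right)\sim c_1c_2\,\varepsilon^{-\alpha}\delta^{-\beta}\,n^{-1}\longrightarrow 0,
\end{equation*}
while a large jump contributes a temporal increment of order $n^{1/\alpha-1/\beta}\to0$ (collapsing onto $\{t=0\}$) and a large rest contributes a spatial increment of order $n^{-1/\alpha}T_1\to0$ (collapsing onto $\{\mathbf{x}_1=\mathbf{0}\}$). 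The limiting intensity therefore splits as the jump part $\nu(\dd(\mathbf{u}t))\delta_0(\dd t)$ plus the time part $\nu(\dd t)\delta_{\mathbf{0}}(\dd(\mathbf{u}t))$, carrying the $\alpha$- and $\beta$-stable radial intensities respectively, which reproduces (\ref{jointlevytriplet2}) and encodes the asymptotic independence of $\mathbf{L}_\alpha$ and $S_\beta$. The main obstacle is exactly this decoupling estimate: one must control the joint tail on the interior of the cone rather than merely the two marginals, and establish the vague convergence uniformly near the coordinate axes so that no mass is lost or spuriously created there. The marginal statements already available from Lemma \ref{lemma:lwwithrestsfconv} and (\ref{jumpfconv}) pin down the two axial pieces, but they cannot by themselves exclude off-axis mass, which is what the displayed estimate supplies.
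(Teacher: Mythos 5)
Your proposal is correct, and at the structural level it takes the same route as the paper: both arguments reduce the lemma to the Meerschaert--Scheffler triangular-array criterion (Th.~3.2.2 of \cite{Meerschaert1}) --- vague convergence of the rescaled intensity $n\,\Prob(A_n\mathbf{Y}_1\in\cdot)$ to the candidate L\'evy measure, trivial Gaussian part --- followed by the invariance principle (Th.~4.1 of \cite{Meerschaert1}) to upgrade to $\mathbb{J}_1$-convergence, and in cases (i) and (iii) both identify the limit measure by conditioning on $\mathbf{V}_1$ and feeding in the one-dimensional asymptotics of the temporal sums from Lemma~\ref{lemma:lwwithrestsfconv}. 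Where you genuinely differ is at the two points where the paper is loosest, and there your version supplies arguments the paper omits. First, in case (i) the paper's computation (\ref{proof:indepjoin}) silently identifies the spatial coordinate $n^{-1/\alpha}T_1\mathbf{u}$ with $t\mathbf{u}$, where $t$ is the value of $n^{-1/\alpha}(T_1+R_1)$; this conflation of $T_1$ with $T_1+R_1$ is legitimate only because $R_1$ is negligible at scale $n^{1/\alpha}$, which is exactly your estimate $n\,\Prob(R_1>\delta n^{1/\alpha})\sim c_2\delta^{-\beta}n^{1-\beta/\alpha}\to0$. Second, in case (ii) the paper merely asserts that the limits are asymptotically independent ``since $R_i$ and $T_i$ are independent'' and delegates the triplet to Corollary~2.3 of \cite{Meerschaert31}; as stated this is incomplete, because the two coordinates of $(\mathbf{J}_i,\,T_i+R_i)$ share $T_i$ and are dependent for every finite $n$. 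Your decoupling estimate --- vanishing joint intensity away from the coordinate subspaces, the $T$-contribution to the temporal coordinate collapsing onto $\{t=0\}$ at rate $n^{1/\alpha-1/\beta}$, and the $R$-contribution carrying no spatial mass --- is precisely the missing argument that forces the limit measure to be supported on the two axes, i.e.\ (\ref{jointlevytriplet2}). The trade-off is the expected one: the paper's proof is shorter because it proceeds by citation, while yours is self-contained and closes these gaps. One blemish you inherit from the paper in case (iii): since the temporal coordinate there is $2T_1$, the support of the true limit measure is the ray $\{(\mathbf{u}t/2,\,t)\}$ rather than $\{(\mathbf{u}t,\,t)\}$, so the factor $\delta_{(\mathbf{u}t)}(\dd\mathbf{x}_1)$ appearing in (\ref{jointlevytriplet3}) should read $\delta_{(\mathbf{u}t/2)}(\dd\mathbf{x}_1)$; neither your sketch nor the paper's proof accounts for this factor of two.
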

\begin{proof}
The proof of Lemma \ref{lemma:joinfconv} is presented in the Appendix B.
\end{proof}

\subsection{Main result}
Before we state the main result of this work, we need to introduce a left-continuous version of any right-continuous processes $\mathbf{X}(t)$ by putting:
$
\label{X_left_limit}
\mathbf{X}^-(t) = \lim_{s \nearrow t} \ \mathbf{X}(s),
$
and the right-continuous version of any left-continuous process $\mathbf{Y}(t)$ by putting:
$
\label{Y_right_limit}
\mathbf{X}^+(t) = \lim_{s \searrow t} \ \mathbf{Y}(s).
$
 As it will be shown in the theorem below, $\mathbf{L}_{\alpha}^-(t)$ appears as a driving process in the continuous-time limit of the wait-first LW. The occurrence of the process $\mathbf{L}_{\alpha}^-(t)$ in the limit of wait-first LWs is the consequence of the fact that the walker follows the so called 'wait-jump' scenario, i.e. $i$-th jump is preceded by $i$-th waiting time. In the case of jump-first LW the waiting time of the walker is preceded by the corresponding jump, making the walker following 'jump-wait' scenario. The second scheme leads to infinite moments of the jump-first LW scaling limit \cite{TeuerleOver2012}. Moreover, we introduce the inverse $\alpha$-stable subordinator $S_\alpha^{-1}(t)$, which is defined as \cite{Meerschaert31}
$
S^{-1}_{\alpha}(t)=\inf\{\tau \geq 0: S_{\alpha}(\tau)>t\}.
$
The process $S^{-1}_{\alpha}(t)$ is the scaling limit of the counting process $N^R(t)$ generated by the sequence
of waiting times $\{T_i+R_i\}_{i\geq 1}$ in case when $\alpha<\beta$.  It appears in the next theorem  that the processes $S^{-1}_{\alpha}(t), S^{-1}_{\beta}(t), (S^R_{\alpha})^{-1}(t)$ play a role of the internal operational time in the limits the L\'evy walks with rests.

\begin{theorem}
\label{th:fconv}
Let $\mathbf{U}^R(t)$ and $\mathbf{O}^R(t)$ be the wait-first and jump-first L\'evy walks with rests, respectively, that are generated by the sequence $\left\{(R_i,T_i,\mathbf{J}_i)\right\}_{i\geq 1}$ of waiting times, rests and jumps given in Definition \ref{def:lwwithrests}. Then:
\begin{itemize}
\item[(i)] if $T_i$ and ${R}_i$ are independent for all $i\geq 1$ and the index of stability $\alpha$ of $T_i$ is smaller then the index of stability $\beta$ of $R_i$, the following holds:
$$
n^{-1/\alpha}{\mathbf{U}^R\left(n^{1/\alpha} t\right)}\Rightarrow \left( \mathbf{L}^{-}_{\alpha}\!\!\left( S^{-1}_{\alpha}(t)\right)\right)^+,\\\nonumber n^{-1/\alpha}{\mathbf{O}^R\left(n^{1/\alpha} t\right)}\Rightarrow  \mathbf{L}^{}_{\alpha}\!\!\left( S^{-1}_{\alpha}(t)\right), \quad \mathrm{as}\ n\to\infty,
$$
where the processes $\mathbf{L^{}}_{\alpha}(t)$ and $S^{}_{\alpha}(t)$ are dependent according to their joint L\'evy measure given by Eq. (\ref{jointlevytriplet1}),
\item[(ii)] if $T_i$ and ${R}_i$ are independent for all $i\geq 1$ and the index of stability $\alpha$ of $T_i$ is greater then the index of stability $\beta$ of $R_i$, the following holds:
$$
n^{-1/\alpha}{\mathbf{U}^R\left(n^{1/\beta} t\right)}\Rightarrow \left( \mathbf{L}^{-}_{\alpha}\!\!\left( S^{-1}_{\beta}(t)\right)\right)^+,\\\nonumber n^{-1/\alpha}{\mathbf{O}^R\left(n^{1/\beta} t\right)}\Rightarrow  \mathbf{L}^{}_{\alpha}\!\!\left( S^{-1}_{\beta}(t)\right),\quad \mathrm{as}\ n\to\infty,
$$
where the processes $\mathbf{L^{}}_{\alpha}(t)$ and $S^{}_{\alpha}(t)$ are independent according to their joint L\'evy measure given by Eq. (\ref{jointlevytriplet2}),
\item[(iii)] if $T_i$ and ${R}_i$ are dependent according to the relation $T_i={R}_i$ for all $i\geq 1$, the following holds:
$$
n^{-1/\alpha}{\mathbf{U}^R\left(n^{1/\alpha} t\right)}\Rightarrow \left( \mathbf{L}^{-}_{\alpha}\!\!\left( \left(S^R\right)^{-1}_{\alpha}(t)\right)\right)^+,\\\nonumber n^{-1/\alpha}{\mathbf{O}^R\left(n^{1/\alpha} t\right)}\Rightarrow  \mathbf{L}^{}_{\alpha}\!\!\left( \left(S^R\right)^{-1}_{\alpha}(t)\right),
$$
as $n\to\infty$, where the processes $\mathbf{L^{}}_{\alpha}(t)$ and $S^{R}_{\alpha}(t)$ are dependent according to their joint L\'evy measure given by Eq. (\ref{jointlevytriplet3}).
\end{itemize}
\end{theorem}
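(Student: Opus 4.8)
The plan is to realise both walks as compositions of the jump partial-sum process with the counting process $N^R$, and then to pass to the limit by a continuous-mapping argument fed by Lemma~\ref{lemma:joinfconv}. Set $A_n(s)=n^{-1/\alpha}\sum_{i=1}^{[ns]}\mathbf{J}_i$ and $D_n(s)=n^{-1/\gamma}\sum_{i=1}^{[ns]}(T_i+R_i)$, where $\gamma=\alpha$ in parts (i) and (iii) and $\gamma=\beta$ in part (ii). Lemma~\ref{lemma:joinfconv} supplies the joint convergence $(A_n,D_n)\fconv(\mathbf{L}_\alpha,S_\bullet)$ in the Skorokhod $\mathbb{J}_1$ topology on $D([0,\infty),\mathbb{R}^d\times\mathbb{R}_+)$, where $S_\bullet$ denotes $S_\alpha$, $S_\beta$ or $S^R_\alpha$ according to the case and carries the dependence on $\mathbf{L}_\alpha$ prescribed by the joint L\'evy measures (\ref{jointlevytriplet1})--(\ref{jointlevytriplet3}). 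The exact starting point is the deterministic identity $\mathbf{O}^R(t)=A(N^R_t+1)$ and $\mathbf{U}^R(t)=A(N^R_t)$ with $A(m)=\sum_{i=1}^{m}\mathbf{J}_i$, so after the rescaling $t\mapsto n^{1/\gamma}t$ both walks become compositions of $A_n$ with the rescaled counting process.

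First I would identify the limit of the counting process. Since $N^R_t$ is, up to the usual off-by-one, the generalised inverse of $s\mapsto\sum_{i=1}^{[s]}(T_i+R_i)$, the convergence $D_n\fconv S_\bullet$ combined with the $\mathbb{J}_1$-continuity of the first-passage/inverse functional $y\mapsto y^{-1}$ at strictly increasing, unbounded paths (Whitt \cite{Whitt}) yields $n^{-1}N^R_{n^{1/\gamma}\cdot}\fconv S_\bullet^{-1}$, recovering the inverse subordinator as the operational-time limit \cite{Meerschaert31}. Because the inverse may be applied jointly with the identity on the jump coordinate, I would upgrade this to $\big(A_n,\,n^{-1}N^R_{n^{1/\gamma}\cdot}\big)\fconv\big(\mathbf{L}_\alpha,S_\bullet^{-1}\big)$, retaining the coupling between $\mathbf{L}_\alpha$ and $S_\bullet$.

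The decisive step is the continuous-mapping argument for the composition $(x,z)\mapsto x\circ z$, which delivers $n^{-1/\alpha}\mathbf{O}^R(n^{1/\gamma}t)\fconv\mathbf{L}_\alpha(S_\bullet^{-1}(t))$ for the jump-first walk. The wait-first walk is the undershoot $\mathbf{U}^R(t)=\mathbf{O}^R(t)-\mathbf{J}_{N^R_t+1}$: on each flat stretch of $S_\bullet^{-1}$, produced by a jump of $S_\bullet$, the overshoot already contains the next increment of $\mathbf{L}_\alpha$ while the undershoot does not, so $\mathbf{U}^R$ converges to the left-continuous composition $\mathbf{L}_\alpha^-(S_\bullet^{-1}(t))$, whose c\`adl\`ag regularisation is the stated $\big(\mathbf{L}_\alpha^-(S_\bullet^{-1}(t))\big)^+$. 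I expect the main obstacle to be precisely here, since the composition map is \emph{not} $\mathbb{J}_1$-continuous in general and, in the coupled cases (i) and (iii), $\mathbf{L}_\alpha$ and $S_\bullet$ jump \emph{simultaneously} (their joint L\'evy measure sits on the diagonal $\mathbf{x}_1=\mathbf{u}t$), so one must establish \emph{almost-sure} continuity of the composition functional at $(\mathbf{L}_\alpha,S_\bullet^{-1})$. This rests on a.s.\ properties of the stable subordinator: $S_\bullet$ is strictly increasing and unbounded, hence $S_\bullet^{-1}$ is a continuous path (with flat stretches but no jumps), and $\mathbb{J}_1$-convergence to this continuous inner limit strengthens automatically to uniform convergence on compacts; composing a $\mathbb{J}_1$-convergent outer sequence with such a uniformly convergent continuous inner sequence then preserves $\mathbb{J}_1$-convergence at the limit point in question, the simultaneous jumps of $\mathbf{L}_\alpha$ being carried onto the flat stretches of $S_\bullet^{-1}$, which is exactly what generates the jumps of the limit. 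In the independent case (ii) the processes $\mathbf{L}_\alpha$ and $S_\beta$ a.s.\ share no jump times and the same argument applies verbatim.

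Finally, the three parts of the theorem match the three parts of Lemma~\ref{lemma:joinfconv} one-to-one: $\alpha<\beta$ gives $\gamma=\alpha$ and the coupled limit $(\mathbf{L}_\alpha,S_\alpha)$ of (\ref{jointlevytriplet1}); $\alpha>\beta$ gives $\gamma=\beta$ and the independent limit $(\mathbf{L}_\alpha,S_\beta)$ of (\ref{jointlevytriplet2}); and $T_i=R_i$ gives $\gamma=\alpha$ with the doubled intensity $\nu_R=2^\alpha\nu$ and the limit $(\mathbf{L}_\alpha,S^R_\alpha)$ of (\ref{jointlevytriplet3}). Substituting the relevant $S_\bullet$ into the two composition formulas produces the asserted limits in each case.
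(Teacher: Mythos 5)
Your overall skeleton coincides with the paper's: joint functional convergence from Lemma~\ref{lemma:joinfconv}, inversion of the rescaled counting process, and a composition step. The paper, however, does not justify the composition step by a generic continuity argument; it invokes Th.~3.6 of \cite{HenryStraka}, a CTRW-specific limit theorem. The gap in your proposal sits exactly at the step you yourself call the main obstacle. The principle you rely on --- that composing a $\mathbb{J}_1$-convergent outer sequence with inner time changes converging uniformly to a continuous limit preserves $\mathbb{J}_1$-convergence of the compositions --- is false precisely when the outer limit jumps at a level at which the inner limit is flat; and in the coupled cases (i) and (iii) this happens a.s.\ at \emph{every} jump of $\mathbf{L}_\alpha$, because the joint L\'evy measures (\ref{jointlevytriplet1}) and (\ref{jointlevytriplet3}) charge only pairs of the form $(\mathbf{u}t,t)$, i.e.\ $\mathbf{L}_\alpha$ and $S_\bullet$ jump simultaneously. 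A counterexample to the principle: let $x_n=\indyk_{[u_n,\infty)}$, $x=\indyk_{[u_0,\infty)}$ with $u_n\downarrow u_0$, and let $z_n=z$ be continuous, nondecreasing, flat at level $u_0$ exactly on $[a,b]$ and strictly increasing elsewhere. Then $x_n\to x$ in $\mathbb{J}_1$ and $z_n\to z$ uniformly, yet $x_n\circ z_n=\indyk_{[t_n,\infty)}$ with $t_n\downarrow b$, while $x\circ z=\indyk_{[a,\infty)}$: the compositions converge, but not to $x\circ z$. Which endpoint of the flat stretch receives the limiting jump is decided by information that uniform convergence of the inner sequence cannot detect.

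The same defect is visible inside your own argument: the wait-first and jump-first walks are compositions of the \emph{same} outer sequence $A_n$ with the inner sequences $n^{-1}N^R(n^{1/\gamma}\,\cdot\,)$ and $n^{-1}N^R(n^{1/\gamma}\,\cdot\,)+n^{-1}$, both of which converge uniformly to the \emph{same} continuous limit $S_\bullet^{-1}$. If your composition principle were valid, it would force both walks to share the limit $\mathbf{L}_\alpha\circ S_\bullet^{-1}$, contradicting the very statement being proved: in cases (i) and (iii) the processes $\left(\mathbf{L}^{-}_{\alpha}(S_\bullet^{-1}(t))\right)^+$ and $\mathbf{L}_{\alpha}(S_\bullet^{-1}(t))$ differ on every flat stretch by the simultaneous jump of $\mathbf{L}_\alpha$ (one has finite moments, the other infinite). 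What actually selects the correct endpoint is the discrete coupling structure: $A_n$ and $D_n(s)=n^{-1/\gamma}\sum_{i=1}^{[ns]}(T_i+R_i)$ jump at the same index times $k/n$, so the increment $n^{-1/\alpha}\mathbf{J}_k$ enters the wait-first walk at the right endpoint of the corresponding flat stretch of $D_n^{-1}(t)=\inf\{s:D_n(s)>t\}$ and enters the jump-first walk at its left endpoint. Your route is sound in case (ii), where by independence $\mathbf{L}_\alpha$ a.s.\ has no jump at any flat level of $S_\beta^{-1}$, the composition map is a.s.\ continuous at the limit pair, and the two walk limits indeed coincide as processes. But for (i) and (iii) you must either cite Th.~3.6 of \cite{HenryStraka}, as the paper does, or reproduce its proof, which exploits this coupled prelimit structure rather than any generic continuity property of composition in $\mathbb{J}_1$.
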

\begin{proof}
The proof of Theorem \ref{th:fconv} is presented in the Appendix C.
\end{proof}

%

%

Finally, it appears that the scaling limits of L\'evy walks with rest are the $\alpha$-stable processes subordinated to the inverse $\alpha$-stable subordinators. We distinguish two main scenarios: first one (case (i) and (iii)) in which the limiting processes is the $\alpha$-stable processes and is subordinated to strongly dependent $\alpha$-stable subordinator (the dependence is precisely defined by their L\'evy triplets, see (\ref{jointlevytriplet3}) and (\ref{jointlevytriplet1})) and second one (case (ii)), in which limiting process is the $\alpha$-stable process independent to its $\beta$-stable inverse subordinator.  

\section{Conclusion}
In this paper we propose a modification of coupled CTRW process, that assume that every waiting time of the model is a sum of two independent positive random variables with at least one of them strongly coupled with the respective jump. The introduced model, depending on the fact that jump are preceding or following the modified waiting time, is called wait-first and jump-fist L\'evy walks with rest, respectively. The above modification is important from the modelling point of view, it allows to include additional waiting time in the trajectories that might be coupled or uncoupled with the jumps. The main result of this paper is asymptotic properties of L\'evy walks with rests by means of the functional convergence in Skorokhod $\mathbb{J}_1$ topology. Our finding also categorize the form of the limiting process depending on the dependence structures between original waiting times and resting times. It is worth noticing that the limiting process is in general of the same form, namely it a $\alpha$-stable process subordinated by another inverse $\alpha$-stable or $\beta$-stable process (in accordance to our notation). The difference is that in the first case the external processes is dependent on the inverse $\alpha$-stable subordinator, while in the second case - independent on the inverse $\beta$-stable subordinator.

Let us note that for the standard wait-first and jump-first LW models were used to establish the functional convergence of the fully continuous L\'evy walk \cite{MagdziarzLW2015}. In analogy to this fact, our findings can be used further to obtain the functional convergence of the fully continuous L\'evy walk with rests \cite{LW_review}. We believe that outcomes of this paper will have also practical meaning for the analysis of large variety of phenomena observed in nature, since the limiting processes can be simulated and estimated by the known numerical procedures \cite{Magdziarz2015}.

\textbf{Acknowledgements.} {This work was supported by NCN, Maestro grant no. 2012/06/A/ST1/00258.}

\section*{Appendix}
\section*{Appendix A. Proof of Lemma \ref{lemma:lwwithrestsfconv}}

The proof of both parts of Lemma \ref{lemma:lwwithrestsfconv} follows from the Generalized Central Limit Theorem (GCLT). In case (i) we consider two independent random variables $T_i$ and $R_i$ that are in the domain of attraction of two stable random variables with indices of stability $\alpha$ and $\beta$, respectively. First let us make first observation, which is based on GCLT, that the sum of these random variables $T_i+R_i$ is on the domain of attraction of a stable distribution with the index of stability that is equal to the minimum of $\alpha$ and $\beta$ \cite{GnedenkoKolmogorov}. Therefore, upon the invariance principle for stable distributions and stable processes \cite{Meerschaert31,Stone}, we conclude that the functional convergence (or functional GCTL) given by (\ref{waitingrestsfconv1}) in Skorokhod $\mathbb{J}_1$ topology holds. The form of the L\'evy measure $\nu$ is a consequence of above facts and the form of analogous L\'evy measure in (\ref{waitingtimeslevytriplet}).

For the proof of part (ii) let us observe that a very strong assumption about the dependence between waiting times and rests, that is $T_i=R_i$, lead us to the observation that in fact we analyze a GCLT for the random variables that belongs the the domain of attraction of stable distribution in general the same (up to the scale parameter) as in $T_i$ case, see (\ref{waitingtimesfconv}). Similarly as in part (i), the invariance principle asserts the conclusion (\ref{waitingrestsfconv2}). Last we comment on the scale parameter of $S^R_{\alpha}(t)$.  The convergence (\ref{waitingtimesfconv}) implies the following behaviour of $T_i$'s: $$n\Prob(n^{-\frac{1}{\alpha}}T_1>x)\stackrel{n\to\infty}{\rightarrow}\nu((x,\infty))=\frac{ x^{-\alpha}}{\Gamma(1-\alpha)}\mathbf{1}_{\{x>0\}}.$$ Using the last property, we obtain that $$n\Prob(n^{-\frac{1}{\alpha}}(T_1+R_1)>x)=n\Prob(n^{-\frac{1}{\alpha}}(2T_1)>x)\stackrel{n\to\infty}{\rightarrow}\nu_R((x,\infty))=\frac{ 2^\alpha x^{-\alpha}}{\Gamma(1-\alpha)}\mathbf{1}_{\{x>0\}},$$
which by the uniqueness of L\'evy triplet asserts the form of L\'evy measure in (\ref{waitingreststriplet2}).

\section*{Appendix B. Proof of Lemma \ref{lemma:joinfconv}}

Before proving the main results of the lemma, let us recall the asymptotic properties of the jumps in considered model. As their construction is the following $\mathbf{J}_i=\mathbf{V}_iT_i$ (defined by (\ref{jumps})), then based on the result presented in \cite{Magdziarz2015}, we have the following convergence in the Skorokhod $\mathbb{J}_1$ topology:
$$
{n^{-1/\alpha}}\sum_{i=1}^{[nt]}\mathbf{J}_{i} \fconv  \mathbf{L}_{\alpha}(t),
$$
as $n\to\infty$, and $\mathbf{L}_{\alpha}(t)$ is a d-dimensional $\alpha$-stable process with the following L\'evy triplet:
$$
\left[\ \int\limits_{\mathbf{x}\neq\mathbf{0}} \frac{\mathbf{x}}{1+\|\mathbf{x}\|^2}\nu(\dd \mathbf{x}),\mathbf{0},\mathbf{\nu}(\dd \mathbf{x})=\frac{\alpha r^{-\alpha-1}\mathbf{\Lambda}(\dd \mathbf{u})}{\Gamma(1-\alpha)}\right]
$$
for $\mathbf{x}\in\mathbb{R}^d, r=\|\mathbf{x}\|,\mathbf{u}=\mathbf{x}/\|\mathbf{x}\|$ and  $\mathbf{\Lambda}(\dd \mathbf{u})=\Prob(\mathbf{V}_1\in\dd \mathbf{u})$,

Now, we prove the part (i) of Lemma \ref{lemma:joinfconv}. For any Borel sets $\mathbf{B}_1\in\mathcal{B}(\mathbb{R}^d)$ and $B_2 \in \mathcal{B}(\mathbb{R}_+ )$ such that  $\mathbf{B}_1(R,\mathbf{D})=\{r\mathbf{u}\in\mathbb{R}^d:r\in R, \mathbf{u} \in \mathbf{D}\}$, where $ R\in\mathcal{B}(\mathbb{R}_+)$ and $\mathbf{D}\in\mathcal{B}(\mathbb{S}^{d-1})$,  we have
\begin{align}
\label{proof:indepjoin}
& n\Prob(n^{-1/\alpha}{T}_1\mathbf{V}_1\in \mathbf{B}_1, n^{-1/\alpha}(T_1+R_1) \in B_2)=\\
&=n\! \int_{\mathbf{D}}\!\!\!\! \Prob(n^{-1/\alpha} T_1\mathbf{u} \in \mathbf{B}_1,n^{-1/\alpha}(T_1+R_1)\in B_2)\Prob(\mathbf{V}_1 \in \dd \mathbf{u})=\nonumber\\
&=n\! \int_{\mathbf{D}}\!\int_{B_2}\!\!\!\!\!\! \Prob( t\mathbf{u} \in \mathbf{B}_1)\Prob(\mathbf{V}_1 \in \dd \mathbf{u})\Prob(n^{-1/\alpha} (T_1+R_1) \in \dd {t})=\nonumber\\\nonumber
&= \int_{\mathbf{D}}\!\int_{B_2}\!\!\!\!\! \indyk(t\mathbf{u} \in \mathbf{B}_1)n\Prob(\mathbf{V}_1 \in \dd \mathbf{u})\Prob(n^{-1/\alpha} (T_1+R_1) \in \dd {t}),
\end{align}
which implies that for $(\mathbf{x},t)\in\mathbb{R}^d\setminus\{\mathbf{0}\}\times\mathbf{R}_+$ and $\mathbf{u}\in\mathbb{S}^{d-1}$ the following equality holds
\begin{align*}
n\Prob(n^{-1/\alpha}{T}_1\mathbf{V}_1\! \in \dd \mathbf{x},n^{-1/\alpha}(T_1+R_1) \in \dd t)\! =\int_{\mathbf{D}}\delta_{( t\mathbf{u})}(\dd  \mathbf{x})\Prob(\mathbf{V}_1 \in \dd \mathbf{u})n\Prob(n^{-1/\alpha} (T_1+R_1)\! \in \dd {t}),
\end{align*}
with the symbol $\delta_\mathbf{x}$ denoting the Dirac's delta at point $\mathbf{x}$.

By letting $n\to\infty$, we obtain from (\ref{proof:indepjoin}) and Lemma \ref{lemma:lwwithrestsfconv}, which deals with the asymptotics of waiting times with rests, that
\begin{align*}
&n\Prob(n^{-1/\alpha} \mathbf{J}_1\! \in \! \dd \mathbf{x}, n^{-1/\alpha} (T_{1}+R_1)\! \in\! \dd t)=\int_{\mathbf{D}}\delta_{(t\mathbf{u})}(\dd  \mathbf{x})n\Prob(n^{-1/\alpha} (T_1+R_1)\! \in \dd {t})\Prob(\mathbf{V}_1 \in \dd \mathbf{u})\\\nonumber
&\longrightarrow \int_{\mathbf{D}}\delta_{t\mathbf{u})}(\dd \mathbf{x})\nu(\dd t)\mathbf{\Lambda}(\dd \mathbf{u}).
\end{align*}
The last results leads to the fact that the limiting joint measure of $(\mathbf{L}_{\alpha}(1),S_{\alpha}(1))$ denoted as $\nu_{(\mathbf{L}_{\alpha},S_{\alpha})}$ is the following:
\begin{align}
\label{proof:indepjointlevymeasure}
\nu_{(\mathbf{L}_{\alpha},S_{\alpha})}(\dd \mathbf{x},\dd t)\stackrel{\mathrm{def}}{=} \int_{\mathbf{D}}\delta_{(t\mathbf{u})}(\dd \mathbf{x})\nu(\dd t)\mathbf{\Lambda}(\dd \mathbf{u})
\end{align}
Therefore condition (a) of Th. 3.2.2 \cite{Meerschaert1} is fulfilled.
Moreover, since the L\'evy measure in (\ref{proof:indepjointlevymeasure}) is a L\'evy measure of $d+1$-dimensional multidimensional $\alpha$-stable variable then the condition (b) of Th. 3.2.2 \cite{Meerschaert1} is fulfilled with $Q_{(\mathbf{L}_{\alpha},S_{\alpha})}\equiv\mathbf{0}$, which completes the proof of convergence in distribution. The last property together with Th. 4.1 \cite{Meerschaert1} (invariance principle) implies the functional convergence (\ref{jointfconv1}).

For the proof the part (ii) of Lemma \ref{lemma:joinfconv} observe the the limiting processes for jumps and waiting times with rests, which are $\mathbf{L}_\alpha(t)$ and $S_\beta(t)$ respectively,  are asymptotically independent since in this part $R_i$'s and $T_i$ are assumed to be independent, see (\ref{jumpfconv}) and (\ref{waitingrestsfconv2}). Therefore for the functional convergence of the joint process of partial sums we get a joint process $(\mathbf{L}_\alpha(t), S_\beta(t))$ that consists of independent parts. By the Corollary 2.3 in \cite{Meerschaert31} we find the L\'evy triplet of the limiting process $(\mathbf{L}_\alpha(t), S_\beta(t))$.

To complete this proof, we shortly comment on the part (iii). Let us recall that in this case the rest $R_i$ and waiting times $T_i$ are strongly coupled by the relation $T_i=R_i$, therefore analogously to the part (i), we have:

\begin{align}
\label{proof:depjoin}
& n\Prob(n^{-1/\alpha}{T}_1\mathbf{V}_1\in \mathbf{B}_1, 2n^{-1/\alpha}T_1 \in B_2)=\\\nonumber
&= \int_{\mathbf{D}}\!\int_{B_2}\!\!\!\!\! \indyk(t\mathbf{u} \in \mathbf{B}_1)n\Prob(\mathbf{V}_1 \in \dd \mathbf{u})\Prob(2n^{-1/\alpha} T_1 \in \dd {t}),
\end{align}
which implies that for $(\mathbf{x},t)\in\mathbb{R}^d\setminus\{\mathbf{0}\}\times\mathbf{R}_+$ and $\mathbf{u}\in\mathbb{S}^{d-1}$ the following equality holds
\begin{align*}
n\Prob(n^{-1/\alpha}{T}_1\mathbf{V}_1\! \in \dd \mathbf{x},2n^{-1/\alpha}T_1 \in \dd t)\! =\int_{\mathbf{D}}\delta_{( t\mathbf{u})}(\dd  \mathbf{x})\Prob(\mathbf{V}_1 \in \dd \mathbf{u})n\Prob((2^{-\alpha} n)^{-1/\alpha} T_1\! \in \dd {t}).
\end{align*}

Again, by letting $n\to\infty$, we obtain from (\ref{proof:depjoin}) and Lemma \ref{lemma:lwwithrestsfconv} that
\begin{align*}
&n\Prob(n^{-1/\alpha} \mathbf{J}_1\! \in \! \dd \mathbf{x}, n^{-1/\alpha} (T_{1}+R_1)\! \in\! \dd t)=\int_{\mathbf{D}}\delta_{(t\mathbf{u})}(\dd  \mathbf{x})n\Prob((2^{-\alpha} n)^{-1/\alpha} T_1\! \in \dd {t})\Prob(\mathbf{V}_1 \in \dd \mathbf{u})\\\nonumber
&\longrightarrow \int_{\mathbf{D}}\delta_{t\mathbf{u})}(\dd \mathbf{x})2^\alpha\nu(\dd t)\mathbf{\Lambda}(\dd \mathbf{u}).
\end{align*}
The last results leads to the fact that the limiting joint measure of $(\mathbf{L}_{\alpha}(1),S^R_{\alpha}(1))$ denoted as $\nu_{(\mathbf{L}_{\alpha},S^R_{\alpha})}$ is the following:
$$
\nu_{(\mathbf{L}_{\alpha},S^R_{\alpha})}(\dd \mathbf{x},\dd t)\stackrel{\mathrm{def}}{=} \int_{\mathbf{D}}\delta_{(t\mathbf{u})}(\dd \mathbf{x})\nu_R(\dd t)\mathbf{\Lambda}(\dd \mathbf{u})
$$
Therefore condition (a) of Th. 3.2.2 \cite{Meerschaert1} is fulfilled. .$\Box$

\section*{Appendix C. Proof of Theorem \ref{th:fconv}}

The functional convergence of wait-first and jump-first L\'evy walk with rests in Thereom \ref{th:fconv} results from the functional convergence of joint waiting times and jumps obtained in Lemma \ref{lemma:joinfconv}
To prove the functional convergence given by (\ref{jointfconv1}) and (\ref{jointfconv3}) the parts (i) and (iii) of Th. \ref{th:fconv} let us take the array of random vectors
$\{(n^{-1/\alpha} \mathbf{J}_i, n^{-1/\alpha} R_i, n^{-1/\alpha} T_i)\}$ and define two auxiliary sequences of wait-first and jump-first LW:
$$
\mathbf{U}^R_n(t) = n^{-\gamma/\alpha} \left(\sum_{i=1}^{N^R_n(t)} \mathbf{J}_i-\mathbf{b}_{N(n^{1/\alpha}t)}  \right), \qquad \mathbf{{O}}^R_n(t) = n^{-1/\alpha} \left(\sum_{i=1}^{N^R_n(t)+1} \mathbf{J}_i -\mathbf{b}_{N(n^{1/\alpha}t)} \right),
$$
where
$$
N^R_n(t) = \max\left\{k \geq 0: \sum_{i=1}^k n^{-1/\alpha}(T_i+R_i) \leq t\right\}.
$$
Since Lemma \ref{lemma:joinfconv} holds and processes $S_\alpha(t)$ and $S^R_\alpha(t)$ have strictly increasing realizations from Th. 3.6 \cite{HenryStraka} it follows that the following convergences hold:
$$
\mathbf{U}^R_n(t) \fconv \left( \mathbf{L}^{-}_{\alpha}\left( S^{-1}_{\alpha}(t)\right)\right)^+, \quad \mathbf{O}^R_n(t) \fconv \mathbf{L}^{}_{\alpha}\left( (S^R)^{-1}_{\alpha}(t)\right),
$$
as $n\to\infty$.\\
Finally observe that  $N^R_n(t)= N^R(n^{1/\alpha}t)$,
therefore $\mathbf{U}^R_n(t) = n^{-1/\alpha} \mathbf{U}^R(n^{-1/\alpha}t)$ and $\mathbf{O}^R_n(t) = n^{-1/\alpha} \mathbf{O}(n^{-1/\alpha}t)$, which completes the proof of part (i) of Th. \ref{th:fconv}. In analogous way we prove the functional convergence (\ref{jointfconv2})  in part (ii) of Th. \ref{th:fconv}.
The form of the L\'evy triplets of the limiting processes follows directly from Lemma \ref{lemma:joinfconv}.$\Box$


\vskip2pc
\bibliographystyle{elsarticle-harv}
\bibliography{sample} 
\end{document}